\pgfplotsset{
	compat=newest, 
	cycle list name=exotic }
\newtheorem{theorem}{Theorem}
\newtheorem{lemma}{Lemma}
\newtheorem{corollary}{Corollary}
\newtheorem{proposition}{Proposition}
\def\t{\theta}
\begin{document}
	\title{On the attainment of the Wasserstein--Cramer--Rao lower bound}

\author{Hayato Nishimori\thanks{Department of Mathematical Informatics, The University of Tokyo, e-mail: \texttt{benzene-ring-78@g.ecc.u-tokyo.ac.jp}}, \ 
	Takeru Matsuda\thanks{Department of Mathematical Informatics, The University of Tokyo \& Statistical Mathematics Unit, RIKEN Center for Brain Science, e-mail: \texttt{matsuda@mist.i.u-tokyo.ac.jp}}}

\date{}

\maketitle

	\begin{abstract}
		Recently, a Wasserstein analogue of the Cramer--Rao inequality has been developed using the Wasserstein information matrix (Otto metric).
		This inequality provides a lower bound on the Wasserstein variance of an estimator, which quantifies its robustness against additive noise.
		In this study, we investigate conditions for an estimator to attain the Wasserstein--Cramer--Rao lower bound (asymptotically), which we call the (asymptotic) Wasserstein efficiency. 
		We show a condition under which Wasserstein efficient estimators exist for one-parameter statistical models.
		This condition corresponds to a recently proposed Wasserstein analogue of one-parameter exponential families (e-geodesics). 
		We also show that the Wasserstein estimator, a Wasserstein analogue of the maximum likelihood estimator based on the Wasserstein score function, is asymptotically Wasserstein efficient in location-scale families.
	\end{abstract}
	
	\section{Introduction}\label{introduction}
The Cramer--Rao inequality is a well-known classical theorem in statistics. 
It provides a lower bound on the variance of (unbiased) estimators through the inverse of the Fisher information matrix.
An estimator is said to be (asymptotically) Fisher efficient if it attains the Cramer--Rao lower bound (asymptotically).
For one-parameter statistical models, an estimator is Fisher efficient if and only if the model is an exponential family and it is the maximum likelihood estimator (MLE) of its expectation parameter \citep{Fisher_efficiency2, Fisher_efficiency}. For general models, the MLE is asymptotically Fisher efficient under regularity conditions \citep{vV}. In information geometry, the Fisher information is adopted as a Riemannian metric on the parameter space and is closely connected to the Kullback--Leibler divergence \citep{amari2000methods}. 

The Wasserstein distance is defined as the optimal transport cost between probability distributions and it induces another geometric structure on the space of probability distributions \citep{villani2}. 
The Wasserstein geometry has been widely applied in many fields, including statistics and machine learning \citep{Chewi,PC2019,Santambrogio}. 
Recently, \citet{WIM} developed Wasserstein counterparts of information geometric concepts such as the Wasserstein information matrix and Wasserstein score function.
They also derived the Wasserstein--Cramer--Rao inequality, which gives a lower bound on the Wasserstein variance of an estimator by the inverse of the Wasserstein information matrix. 
Whereas the usual variance quantifies the accuracy of an estimator, the Wasserstein variance can be interpreted as the robustness of an estimator against additive noise \citep{affine}.
\citet{WIM} also proposed an estimator called the Wasserstein estimator as the zero point of the Wasserstein score function.

In this study, we investigate conditions for an estimator to attain the Wasserstein--Cramer--Rao lower bound (asymptotically), which we call the (asymptotic) Wasserstein efficiency. 
In Section \ref{review}, we briefly review the Wasserstein--Cramer--Rao inequality. 
In Section \ref{one-parameter}, we focus on one-parameter models and derive a condition for Wasserstein efficiency in finite samples, which corresponds to recently proposed Wasserstein analogue of one-parameter exponential families (e-geodesics) \cite{ay2024information}. 
In Section \ref{location-scale}, we focus on location-scale families and show that the Wasserstein estimator is asymptotically Wasserstein efficient.

\section{Wasserstein--Cramer--Rao Inequality}\label{review}
In this section, we briefly review the Wasserstein information matrix and Wasserstein--Cramer--Rao inequality introduced by \citet{WIM}. 
We consider a parametric density $p(x;\theta)$ on $\mathbb{R}^d$ with parameter $\theta \in \mathbb{R}^p$ in the following.

The Wasserstein score function $\Phi_i(x;\theta)$ for $i=1,\dots,p$ is defined as the solution to the partial differential equation
\begin{align}\label{WscorePDE}
	\frac{\partial}{\partial \theta_i} p(x;\theta) + \nabla_x \cdot \left( p(x;\theta) \nabla_x \Phi_i(x;\theta) \right) = 0
\end{align}
satisfying ${\rm E}_{\theta}[\Phi_i(x;\theta)] = 0$, where $\nabla_x \cdot f$ is the divergence of a vector field $f=(f_1,\dots,f_d)$ given by
\begin{align*}
	\nabla_x \cdot f = \sum_{i=1}^d \frac{\partial f_i}{\partial x_i}
\end{align*}
and $\nabla_x g$ is the gradient of a function $g$ given by
\begin{align*}
	\nabla_x g = \left( \frac{\partial g}{\partial x_1}, \dots, \frac{\partial g}{\partial x_d} \right)^{\top}.
\end{align*}
Note that \eqref{WscorePDE} is often called the continuity equation in the dynamic formulation of the optimal transport problem \cite{benamou,villani2}.
In this context, the Wasserstein score function $\Phi_i(x;\theta)$ can be viewed as a solution of the Hamilton--Jacobi equation (up to additive constant), where $\theta_i$ is adopted as the time variable.

The Wasserstein information matrix $G_W(\theta) \in \mathbb{R}^{d\times d}$ is defined as
\begin{align*}
	G_W(\theta)_{ij} &={\rm E}_{\theta} \left[ (\nabla_x \Phi_i(x;\theta))^{\top} (\nabla_x \Phi_j(x;\theta)) \right], \quad i,j=1,\dots,d.
\end{align*}
Recall that the $L^2$-Wasserstein distance $W_2(p,q)$ between two probability densities $p$ and $q$ on $\mathbb{R}^d$ is defined by
\begin{align*}
	W_2 (p,q) = \inf_{X,Y} \ {\rm E} [ \| X-Y\|^2 ] ^{\frac{1}{2}},
\end{align*}
where the infimum is taken over all joint distributions (coupling) of $(X,Y)$ with marginal distributions of $X$ and $Y$ equal to $p$ and $q$, respectively.
The Wasserstein information matrix appears in the quadratic approximation of the $L^2$ Wasserstein distance:
\begin{align*}
	W_2(p_\theta,p_{\theta+\Delta\theta})^2 = \Delta\theta^\top G_W(\theta) \Delta\theta + o(\|\Delta\theta\|^2).
\end{align*}

For a statistic $a(x)\in\mathbb{R}^l$, its Wasserstein variance ${\rm Var}_{\theta}^{\mathrm{W}}(a(x))\in\mathbb{R}^{l\times l}$ is defined by
\begin{align*}
	{\rm Var}_{\theta}^{\mathrm{W}}(a(x))_{ij}= {\rm E}_{\theta} \left[ (\nabla_x a_i(x))^{\top} (\nabla_x a_j(x)) \right], \quad i,j=1,\dots,l.
\end{align*}
Note that the Wasserstein information matrix is the Wasserstein variance of the Wasserstein score function.

\begin{lemma}[Wasserstein--Cramer--Rao inequality \citep{WIM}]
	For a statistic $a(x)\in\mathbb{R}^l$,
	\begin{align}\label{WCR inequality}
		{\rm Var}_{\theta}^{\mathrm{W}}(a(x)) \succeq \left( \frac{\partial}{\partial \theta} {\rm E}_{\theta}[a(x)]\right)^\top G_W(\theta)^{-1}\left(\frac{\partial}{\partial \theta} {\rm E}_{\theta}[a(x)]\right),
	\end{align}
	where 
	\begin{align*}
		\frac{\partial}{\partial \theta} {\rm E}_{\theta}[a(x)] := \left(\frac{\partial}{\partial \theta} {\rm E}_{\theta}[a_j(x)]\right)_{ij} \in \mathbb{R}^{d\times l}.
	\end{align*}
	In particular, if $d=l$ and $a(x)$ is an unbiased estimator of $\theta$ (${\rm E}_{\theta}[a(x)]=\theta$), then
	\begin{align*}
		{\rm Var}_{\theta}^{\mathrm{W}}(a(x)) \succeq G_W(\theta)^{-1}.
	\end{align*}
\end{lemma}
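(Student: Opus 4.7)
The plan is to mimic the classical Cramer--Rao argument, replacing the score-function identity with one derived from the continuity equation \eqref{WscorePDE}. The central identity to establish is
\begin{align*}
\pd{}{\theta_i}\mathrm{E}_{\theta}[a_j(x)] = \mathrm{E}_{\theta}\!\left[(\nabla_x a_j(x))^{\top}(\nabla_x \Phi_i(x;\theta))\right],
\end{align*}
for all $i=1,\dots,p$ and $j=1,\dots,l$. To prove this I would first differentiate under the integral sign in $\mathrm{E}_{\theta}[a_j(x)]=\int a_j(x)p(x;\theta)\,dx$, then substitute $\partial_{\theta_i}p = -\nabla_x\cdot(p\,\nabla_x\Phi_i)$ from \eqref{WscorePDE}, and finally apply integration by parts in $x$, using decay of $p$ (or $p\nabla_x\Phi_i$) at infinity to kill the boundary term. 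This turns the $\theta$-derivative of the expectation into an inner product between $\nabla_x a_j$ and $\nabla_x\Phi_i$ under $p(\cdot;\theta)$, which is exactly the format of the Wasserstein variance.

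Next I would package this identity into a positive semidefinite block matrix. Let $M \in \mathbb{R}^{p\times l}$ be defined by $M_{ij}=\mathrm{E}_{\theta}[(\nabla_x\Phi_i)^{\top}(\nabla_x a_j)]$, so that $M = \partial_\theta \mathrm{E}_{\theta}[a(x)]$. For arbitrary $u\in\mathbb{R}^l$ and $v\in\mathbb{R}^p$, expanding the nonnegative quantity
\begin{align*}
\mathrm{E}_{\theta}\!\left[\Big\| \sum_{j=1}^{l} u_j\,\nabla_x a_j(x) + \sum_{i=1}^{p} v_i\,\nabla_x\Phi_i(x;\theta)\Big\|^2\right] \ge 0
\end{align*}
shows that the block matrix
\begin{align*}
\begin{pmatrix} \mathrm{Var}_{\theta}^W[a(x)] & M^{\top} \\ M & G_W(\theta) \end{pmatrix}
\end{align*}
is positive semidefinite. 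Taking the Schur complement with respect to the (assumed invertible) block $G_W(\theta)$ immediately yields
\begin{align*}
\mathrm{Var}_{\theta}^W[a(x)] \succeq M^{\top} G_W(\theta)^{-1} M,
\end{align*}
which is the desired inequality \eqref{WCR inequality}. The unbiased case follows by setting $M = I$.

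The only delicate point is the integration-by-parts step: it requires integrability/decay assumptions so that the boundary terms vanish, e.g., $p(x;\theta)a_j(x)\nabla_x\Phi_i(x;\theta) \to 0$ sufficiently fast as $\|x\|\to\infty$, and similarly that differentiation under the integral sign is justified. Aside from these standard regularity conditions (which are implicit throughout \citet{WIM}), the rest of the argument is purely algebraic and parallels the classical Cramer--Rao derivation, with $\nabla_x\Phi_i$ playing the role that the Fisher score $\partial_{\theta_i}\log p$ plays in the usual setting.
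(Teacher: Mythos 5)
Your proposal is correct. Note that the paper itself does not prove this lemma (it is quoted from \citet{WIM}); the closest in-paper argument is the scalar case inside the proof of Theorem~\ref{prop_equality}, which uses exactly your key identity $\pd{}{\theta_i}{\rm E}_{\theta}[a_j(x)]={\rm E}_{\theta}[(\nabla_x a_j(x))^{\top}(\nabla_x\Phi_i(x;\theta))]$, derived the same way (continuity equation plus integration by parts with vanishing boundary terms), followed by Cauchy--Schwarz via the discriminant of a quadratic. Your block-matrix positive semidefiniteness plus Schur complement with respect to the invertible block $G_W(\theta)$ is the standard matrix-valued generalization of that same Cauchy--Schwarz idea, so it correctly delivers the full statement rather than only the scalar case; the only caveats are the regularity assumptions you already flag (differentiation under the integral, decay of $p\,a_j\nabla_x\Phi_i$ at infinity) and invertibility of $G_W(\theta)$, which the statement implicitly assumes.
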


We refer to the inequality \eqref{WCR inequality} as the Wasserstein--Cramer--Rao inequality in the following.
Recently, \cite{affine} discussed its connection to robustness of an estimator against additive noise.
We also note that the Wasserstein--Cramer--Rao inequality has been obtained independently in statistical physics and called the short-time thermodynamic uncertainty relation \cite{dechant2022geometric,dechant2022geometric2,ito,otsubo2020estimating}.

In this paper, we say that an estimator is (asymptotically) Wasserstein efficient if it attains the Wasserstein--Cramer--Rao lower bound (asymptotically).
We investigate conditions of (asymptotic) Wasserstein efficiency in the following.

\section{Wasserstein efficiency in one-parameter models}\label{one-parameter}
In this section, we focus on scalar estimators for one-parameter models  $p(x; \theta)$ on $\mathbb{R}^d$ (i.e., $l=p=1$).
In this setting, attainment of the (original) Cramer--Rao lower bound has been studied well \cite{Fisher_efficiency,Fisher_efficiency2}.
Namely, a scalar estimator $a(x)$ attains the Cramer--Rao lower bound for every $\theta$ if and only if the model is a one-parameter exponential family 
\begin{align}\label{exp_family}
	p(x; \theta) = g(x) \exp (\theta T(x) - \psi(\theta))
\end{align}
and the estimator $a(x)$ is the maximum likelihood estimator of its expectation parameter $T(x)$ (or its affine transform).
Note that a one-parameter exponential family corresponds to an e-geodesic with respect to the Fisher metric in information geometry \cite{amari2000methods}.

Now, we consider the Wasserstein case.
We write $\Phi(x;\theta)=\Phi_1(x;\theta)$ for convenience.
Since the Wasserstein--Cramer--Rao inequality \eqref{WCR inequality} is derived from the Cauchy--Schwarz inequality \cite{WIM,affine}, its equality condition is obtained as follows.

\begin{theorem}\label{th_equality}
	Let $p(x; \theta)$ be a one-parameter model on $\mathbb{R}^d$ and $a(x)$ be a scalar estimator.
	Then,
	\begin{align*}
		{\rm Var}_{\theta}^{\mathrm{W}}(a(x)) \geq \frac{1}{ G_W(\theta)} \left( \frac{\partial}{\partial \theta} {\rm E}_{\theta}[a(x)]\right)^2
	\end{align*}
	and the equality holds if and only if 
	\begin{align*}
		a(x) = u(\theta)\Phi(x; \theta)  + v(\theta)
	\end{align*}
	for some $u(\theta)$ and $v(\theta)$.
\end{theorem}
\begin{proof}
	For random vectors $U$ and $V$, 
	\begin{align*}
		0 \leq {\rm E} \| tU+V \|^2 = {\rm E} [\| U \|^2] t^2 + 2 {\rm E} [U^{\top} V] t + {\rm E} [\| V \|^2]
	\end{align*}
	for every $t$.
	Thus, by considering the discriminant of the quadratic equation,
	\begin{align}
		{\rm E} [U^{\top} V]^2 \leq {\rm E} [\| U \|^2] {\rm E} [\| V \|^2], \label{CS}
	\end{align}
	where the equality holds if and only if $U$ and $V$ are linearly dependent.
	
	From the definition of the Wasserstein score function,
	\begin{align*}
		\frac{\partial}{\partial \theta_i} {\rm E}_{\theta} [a(x)] &= \int a(x) \frac{\partial}{\partial \theta_i} p(x; \theta) {\rm d} x \\
		&= -\int a(x) \nabla_{x} \cdot (p(x; \theta) \nabla_{x} \Phi(x; \theta)) {\rm d} x \\
		&= -\int (\nabla_{x} \cdot (a(x) p(x; \theta) \nabla_{{x}} \Phi(x; \theta)) - (\nabla_{{x}} a(x))^{\top} (\nabla_{{x}} \Phi(x; \theta)) p(x; \theta) ) {\rm d} x \\
		&= {\rm E}_{\theta} [(\nabla_{{x}} a(x))^{\top} (\nabla_{{x}} \Phi(x; \theta))],
	\end{align*}
	where we used the Gauss's divergence theorem and $p(x; \theta) \to 0$ as $\| x \| \to \infty$ in the fourth equality.
	Also,
	\begin{align*}
		{\rm E}_{\theta} [ \| \nabla_x a(x) \|^2 ] = {\rm Var}_{\theta}^{\mathrm{W}} [a(x)] , \quad {\rm E}_{\theta} [ \| \nabla_x \Phi(x; \theta) \|^2 ] = G_W (\theta).
	\end{align*}
	Thus, the inequality \eqref{CS} with $U =\nabla_{x} a(x)$ and $V = \nabla_{x} \Phi(x;\theta)$ is equivalent to the Wasserstien--Cramer--Rao inequality \eqref{WCR inequality}:
	\begin{align*}
		{\rm Var}_{\theta}^{\mathrm{W}}(a(x)) \geq \frac{1}{ G_W(\theta)} \left( \frac{\partial}{\partial \theta} {\rm E}_{\theta}[a(x)]\right)^2.
	\end{align*}
	Therefore, the Wasserstein--Cramer--Rao lower bound is attained if and only if $\nabla_x a(x)$ and $\nabla_x \Phi(x; \theta)$ are linearly dependent.
	This condition is rewritten as $a(x) = u(\theta) \Phi(x; \theta) + v(\theta)$ for some $u(\theta)$ and $v(\theta)$.
\end{proof}

Recently, \cite{ay2024information} provided a framework of Wasserstein information geometry by introducing the e-connection as the dual of the m-connection with respect to the Otto metric, which is defined as the Riemannian metric on the Wasserstein space \citep{Otto}.
This e-connection is different from the one in the usual information geometry, which is the dual of the m-connection with respect to the Fisher metric \cite{amari2000methods}.
The e-geodesics in the usual information geometry are given by one-parameter exponential families \eqref{exp_family}, and their Fisher score functions do not depend on $\theta$ (up to additive constant):
\begin{align*}
	\frac{\partial}{\partial \theta} \log p(x;\theta) = T(x)-\psi'(\theta).
\end{align*}
Analogously, the e-geodesics in the Wasserstein information geometry of \cite{ay2024information} are characterized as one-parameter models with fixed Wasserstein score functions (up to additive constant):
\begin{align*}
	\Phi(x; \theta) = T(x) - c(\theta).
\end{align*}
Note that it is different from the displacement interpolation \cite{villani2}, which corresponds to the geodesic with respect to the Levi-Civita connection for the Otto metric.
From this viewpoint, Theorem~\ref{th_equality} can be rewritten as follows.

\begin{corollary}\label{cor_geodesic}
	For a regular one-parameter model $p(x; \theta)$ on $\mathbb{R}^d$, a non-constant scalar estimator $a(x)$ attains the Wasserstein--Cramer--Rao lower bound for every $\theta$ if and only if the model corresponds to an e-geodesic with respect to the Otto metric (up to monotone reparametrization) and $a(x)$ is an affine function of its Wasserstein score function.
\end{corollary}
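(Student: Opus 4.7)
My plan is to recast Theorem~\ref{prop_equality} into the geometric language of e-geodesics via the transformation rule for the Wasserstein score under reparametrization. By Theorem~\ref{prop_equality}, attainment of the Wasserstein--Cramer--Rao bound at every $\theta$ is equivalent to $a(x) = u(\theta)\Phi(x;\theta) + v(\theta)$; differentiating in $x$ yields $\nabla_x \Phi(x;\theta) = u(\theta)^{-1}\nabla_x a(x)$, so the gradient of the Wasserstein score is a $\theta$-dependent scalar multiple of the fixed vector field $\nabla_x a(x)$.

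The key step is the covariance rule for $\Phi$ under a smooth monotone reparametrization $\theta = \theta(\eta)$: substituting $\partial p/\partial\eta = \theta'(\eta)\,\partial p/\partial\theta$ into the continuity equation \eqref{WscorePDE} and comparing with the defining equation of the new score $\tilde{\Phi}(x;\eta)$ produces $\nabla_x \tilde{\Phi}(x;\eta) = \theta'(\eta)\,\nabla_x \Phi(x;\theta(\eta))$, with the additive function of $\eta$ pinned down by the normalization ${\rm E}_{\theta(\eta)}[\tilde{\Phi}]=0$. Choosing $d\eta/d\theta = 1/u(\theta)$, which is monotone because $u$ must have constant sign on the parameter range (else $\nabla_x a$ would vanish on a nonempty open set of $\theta$), the rule produces $\nabla_x \tilde{\Phi}(x;\eta) = \nabla_x a(x)$ independent of $\eta$; this is precisely the e-geodesic characterization of \cite{ay2024information}, and integrating in $x$ gives $a(x) = \tilde{\Phi}(x;\eta) - c(\eta)$, exhibiting $a$ as an affine function of the now parameter-independent Wasserstein score.

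For the converse I would reverse the argument: given a parametrization $\eta$ in which $\nabla_x\tilde{\Phi}$ is independent of $\eta$ and $a(x) = \alpha\tilde{\Phi}(x) + \beta$, the covariance rule in the original $\theta$-parametrization yields $u(\theta) = \alpha\theta'(\eta(\theta))$ together with some $v(\theta)$, placing $a$ in the form required by Theorem~\ref{prop_equality} and hence guaranteeing attainment. The main obstacle I anticipate is the careful accounting of the additive-function-of-$\theta$ gauge freedom in $\Phi$---which is fixed only after imposing the expectation-zero normalization---and justifying monotonicity of the reparametrization induced by $1/u(\theta)$; both reduce to routine bookkeeping once the covariance rule is in hand.
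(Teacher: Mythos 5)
Your proposal is correct and takes essentially the same route as the paper: apply Theorem~\ref{prop_equality}, note that the scores at different parameter values are affinely related, and eliminate the $\theta$-dependence by the monotone reparametrization $d\eta/d\theta = 1/u(\theta)$ to arrive at the fixed-score (e-geodesic) characterization of \cite{ay2024information}. The paper's own proof states this reparametrization step more informally, so your explicit covariance rule for $\nabla_x \Phi$ under reparametrization adds detail rather than a different method.
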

\begin{proof}
	For a parameter transformation $\widetilde{\theta}=h(\theta)$, we have
	\begin{align*}
		\frac{\partial}{\partial \widetilde{\theta}} p(x;\widetilde{\theta}) = \frac{1}{h'(\theta)} \frac{\partial}{\partial \theta} p(x;\theta).
	\end{align*}
	Thus, by setting
	\begin{align}\label{Wscore_trans}
		\Phi(x;\widetilde{\theta}) = \frac{1}{h'(\theta)} \Phi(x;{\theta}),
	\end{align}
	we obtain the continuity equation \eqref{WscorePDE} under $\widetilde{\theta}$:
	\begin{align*}
		\frac{\partial}{\partial \widetilde{\theta}} p(x;\widetilde{\theta}) + \nabla_x \cdot \left( p(x;\widetilde{\theta}) \nabla_x \Phi(x;\widetilde{\theta}) \right) = 0
	\end{align*}
	Namely, \eqref{Wscore_trans} gives the transformation rule of the Wasserstein score fuctions for reparametrization $\widetilde{\theta}=h(\theta)$.
	
	Now, from Theorem~\ref{th_equality}, an estimator $a(x)$ attains the Wasserstein--Cramer--Rao lower bound for every $\theta$ if and only if $a(x) = u(\theta)\Phi(x; \theta)  + v(\theta)$ for every $\theta$.
	Since $a(x)$ is not constant, we have $u(\theta) \neq 0$.
	Also, from the regularity of $p(x; \theta)$, $u(\theta)$ is continuous.
	Thus, $u(\theta)$ does not change sign.
	Therefore, the function
	\begin{align*}
		h(\theta) = \int_0^{\theta} \frac{1}{u(\theta')} {\rm d} \theta',
	\end{align*}
	is monotone.
	Consider the parameter transformation $\widetilde{\theta}=h(\theta)$.
	From \eqref{Wscore_trans}, the Wasserstein score function under $\widetilde{\theta}$ is
	\begin{align*}
		\Phi(x;\widetilde{\theta}) = \frac{1}{h'(\theta)} \Phi(x;{\theta}) = u(\theta) \Phi(x;{\theta}) = a(x)-v(\theta),
	\end{align*}
	which does not depend on $\widetilde{\theta}$ up to additive constant.
	Therefore, the model $p(x;\widetilde{\theta})$ is an e-geodesic with respect to the Otto metric as introduced in \cite{ay2024information}.
	In other words, the model $p(x;\theta)$ is an e-geodesic with respect to the Otto metric up to monotone reparametrization.
\end{proof}

Since an e-geodesic with respect to the Otto metric can be viewed as a Wasserstein analogue of the one-parameter exponential family, Corollary~\ref{cor_geodesic} is a natural generalization of the classical result on attainment of the Cramer--Rao lower bound \cite{Fisher_efficiency,Fisher_efficiency2}.
The parameter transformation $\widetilde{\theta}=h(\theta)$ in the proof is similar to a unit-speed parametrization of a curve on a Riemannian manifold.
We give several examples of Wasserstein efficient estimators for $d=1$.

\begin{proposition}
	\begin{enumerate}
		\item The estimator $a(x) = x$ is Wasserstein efficient if and only if the model is the location family 
		\begin{align}
			p(x; \theta) = f(x-\theta). \label{loc}
		\end{align}
		
		\item The estimator $a(x) = x^2$ is Wasserstein efficient if and only if the model is the scale family
		\begin{align}
			p(x; \theta) = \frac{1}{\theta} f \left( \frac{x}{\theta} \right). \label{scale}
		\end{align}
	\end{enumerate}
\end{proposition}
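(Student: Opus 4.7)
The plan is to apply Theorem~\ref{prop_equality} to the joint density $p_n(x;\theta)=\prod_{t=1}^n p(x_t;\theta)$ of the sample $x=(x_1,\ldots,x_n)\in\mathbb{R}^n$: by that theorem, $a(x)$ is Wasserstein efficient if and only if $a(x)=u(\theta)\Phi(x;\theta)+v(\theta)$ for some scalar functions $u,v$, where $\Phi$ denotes the Wasserstein score of $p_n$. Since the two parts are parallel in structure, I would sketch the location case in detail and treat the scale case analogously.

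For the forward direction I would exhibit $\Phi$ explicitly. In the location family $p(x;\theta)=f(x-\theta)$ the sample density satisfies $\partial_\theta p_n=-\sum_t\partial_{x_t}p_n$, so the ansatz $\nabla_x\Phi=(1,\ldots,1)^\top$ immediately solves the continuity equation \eqref{WscorePDE}; fixing the additive constant by ${\rm E}_\theta[\Phi]=0$ gives $\Phi(x;\theta)=\sum_t x_t-n(\theta+\mu_0)$ with $\mu_0=\int y f(y)\,{\rm d}y$, and hence $a(x)=\tfrac{1}{n}\Phi(x;\theta)+(\theta+\mu_0)$ has the form required by Theorem~\ref{prop_equality}. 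For the scale family, the identity $\partial_\theta p=-\theta^{-1}\partial_x(x p)$ shows that $\Phi(x;\theta)=\tfrac{1}{2\theta}\sum_t x_t^2+\beta(\theta)$ satisfies \eqref{WscorePDE}, and one checks that $a(x)=\tfrac{2\theta}{n}\Phi(x;\theta)+\theta^2\mu_2$ for a suitable $\beta(\theta)$ and $\mu_2=\int y^2 f(y)\,{\rm d}y$.

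For the converse, the affine relation $a(x)=u(\theta)\Phi+v(\theta)$ pins down the gradient of $\Phi$ up to a scalar factor: $\nabla_x\Phi=c(\theta)(1,\ldots,1)^\top$ for the sample mean and $\nabla_x\Phi=c(\theta)(x_1,\ldots,x_n)^\top$ for the sample mean of squares. Substituting into the continuity equation for $p_n$ and dividing by $p_n$ produces an identity of the form
\begin{equation*}
\sum_{t=1}^n F(x_t;\theta)=0,
\end{equation*}
in which $F$ is a single one-variable function. Varying $x_1$ with the remaining coordinates fixed forces $F\equiv 0$, collapsing the problem to the scalar transport equation $\partial_\theta p+c(\theta)\partial_x p=0$ in the first case and $\partial_\theta p+c(\theta)\partial_x(xp)=0$ in the second.

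I would finish by solving these transport equations via the method of characteristics: the first yields $p(x;\theta)=f(x-C(\theta))$ and the second yields $p(x;\theta)=e^{-C(\theta)}f(xe^{-C(\theta)})$, with $C'=c$. Identifiability of $\theta$ forces $C$ to be monotone, so reparametrizing $\theta\leftarrow C(\theta)$ (respectively $\theta\leftarrow e^{C(\theta)}$) recovers the stated location and scale families. The main obstacle is the separability step in the converse: concluding $F\equiv 0$ from $\sum_t F(x_t;\theta)=0$ requires $n\geq 2$ so that the $x_t$ can be varied independently; the $n=1$ case is immediate because the continuity equation for a single observation is already in separated form.
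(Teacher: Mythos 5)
Your argument is correct and is exactly the route the paper intends: the proposition is stated without proof as a direct consequence of Theorem~\ref{prop_equality} applied to the $n$-fold product model, and your computation of the product-model Wasserstein scores for the location and scale families, together with the separation-of-variables and characteristics argument for the converse, supplies the details the paper leaves implicit. The only caveat is that the ``only if'' direction recovers the stated families only up to a monotone reparametrization of $\theta$ (consistent with the phrasing of Corollary~\ref{cor_geodesic}), which you correctly note and handle.
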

\begin{proof}
	\begin{enumerate}
		\item The Wasserstein score function of the location family \eqref{loc} is
		\begin{align*}
			\Phi(x; \theta) = x-\theta,
		\end{align*}
		where we assume ${\rm E}_{\theta}[x] = \theta$ without loss of generality.
		Thus, we obtain the result by using Theorem~\ref{th_equality} with $u(\theta)=1$ and $v(\theta)=\theta$.
		
		\item The Wasserstein score function of the scale family \eqref{scale} is
		\begin{align*}
			\Phi(x; \theta) = \frac{x^2}{2 \theta} - \frac{\theta}{2},
		\end{align*}
		where we assume ${\rm E}_{\theta}[x^2] = \theta^2$ without loss of generality.
		Thus, we obtain the result by using Theorem~\ref{th_equality} with $u(\theta)=\theta$ and $v(\theta)=0$.
	\end{enumerate}
\end{proof}

\section{Wasserstein efficiency in location-scale families}\label{location-scale}
In this section, we consider location-scale families on $\mathbb{R}$:
\begin{align}\label{lcf}
	p(x; \theta) = \frac{1}{\sigma}f\left(\frac{x-\mu}{\sigma}\right), \quad \theta=(\mu,\sigma),
\end{align}
where $f$ is a probability density on $\mathbb{R}$ with mean zero and variance one (e.g., ${\rm N}(0,1)$). 
The mean and variance of $p(x; \theta)$ are $\mu$ and $\sigma^2$, respectively.
Its Wasserstein score function is 
\begin{align}\label{Wscore}
	\Phi_{\mu}(x;\theta) = x-\mu, \quad \Phi_{\sigma}(x;\theta) = \frac{(x-\mu)^2}{2\sigma} - \frac{\sigma}{2},
\end{align}
which can be confirmed by substitution into \eqref{WscorePDE}:
\begin{align*}
	\frac{\partial}{\partial \mu} p(x;\theta) + \frac{\partial}{\partial x} \left( p(x;\theta) \frac{\partial}{\partial x} (x-\mu) \right) = \frac{\partial}{\partial \mu} p(x;\theta) + \frac{\partial}{\partial x} p(x;\theta) = 0,
\end{align*}
\begin{align*}
	& \frac{\partial}{\partial \sigma} p(x;\theta) + \frac{\partial}{\partial x} \left( p(x;\theta) \frac{\partial}{\partial x} \left( \frac{(x-\mu)^2}{2\sigma} - \frac{\sigma}{2} \right) \right) \\
	= & \left( -\frac{1}{\sigma^2} f\left(\frac{x-\mu}{\sigma}\right) + \frac{1}{\sigma} f'\left(\frac{x-\mu}{\sigma}\right) \left(-\frac{x-\mu}{\sigma^2}\right) \right) + \frac{\partial}{\partial x} \left( \frac{1}{\sigma} f \left(\frac{x-\mu}{\sigma}\right) \cdot \frac{x-\mu}{\sigma} \right) \\
	=& 0.
\end{align*}

Suppose that we have $n$ independent observations $x_1,\dots,x_n$ from $p(x;\theta)$.	
Then, the Wasserstein estimator $\hat{\theta}_W=(\hat{\mu}_W,\hat{\sigma}_W)$ is defined as the zero point of the Wasserstein score function \citep{WIM}:
\begin{align*}
	\sum_{t=1}^n \Phi_{\mu}(x_t; \hat{\theta}_W) = \sum_{t=1}^n \Phi_{\sigma}(x_t; \hat{\theta}_W) = 0.
\end{align*}	
From \eqref{Wscore}, it is given by the sample mean and sample standard deviation: 
\begin{align}\label{West}
	\hat{\mu}_W = \bar{x} = \frac{1}{n}\sum_{t=1}^n x_t,\quad  \hat{\sigma}_W = \sqrt{\frac{1}{n}\sum_{t=1}^n (x_t-\bar{x})^2}.
\end{align}

\begin{theorem}\label{th: asymptotic efficiency}
	For the location-scale family \eqref{lcf}, the Wasserstein estimator $\hat{\theta}_W=(\hat{\mu}_W,\hat{\sigma}_W)$ in \eqref{West} asymptotically attains the Wasserstein--Cramer--Rao lower bound:
	\begin{align*}
		n \left( {\rm Var}_{\theta}^{\mathrm{W}}(\hat{\theta}_W) - \frac{1}{n} \left(\frac{\partial}{\partial \theta} {\rm E}_{\theta} [\hat{\theta}_W] \right)^\top G_W(\theta)^{-1} \left(\frac{\partial}{\partial \theta} {\rm E}_{\theta} [\hat{\theta}_W]\right) \right) \to \begin{pmatrix} 0 & 0 \\ 0 & 0 \end{pmatrix}
	\end{align*}
	as $n\to\infty$.
\end{theorem}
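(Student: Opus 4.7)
The plan is to compute every quantity appearing in the theorem in closed form, which reduces the claim to the convergence of a single scalar constant $c_n \to 1$ that measures the bias of $\hat{\sigma}_W$. Because the Wasserstein score functions in \eqref{Wscore} are very simple, almost all computations are explicit.

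First I would handle the constants. From \eqref{Wscore}, $\nabla_x \Phi_{\mu} = 1$ and $\nabla_x \Phi_{\sigma} = (x-\mu)/\sigma$, so using ${\rm E}_\theta[x]=\mu$ and ${\rm E}_\theta[(x-\mu)^2]=\sigma^2$ gives $G_W(\theta)=I_2$, hence $G_W(\theta)^{-1}=I_2$. Next I would compute $\mathrm{Var}_\theta^W[\hat{\theta}_W]$ directly. Writing $\hat{\sigma}_W^2 = n^{-1}\sum_t x_t^2 - \bar x^2$ gives $\partial_{x_t}\hat{\sigma}_W = (x_t-\bar x)/(n\hat{\sigma}_W)$ while $\partial_{x_t}\hat{\mu}_W = 1/n$. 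Using $\sum_t(x_t-\bar x)=0$ for the cross term and $\sum_t(x_t-\bar x)^2 = n\hat{\sigma}_W^2$ for the diagonal term, one obtains the exact identity $\mathrm{Var}_\theta^W[\hat{\theta}_W] = n^{-1}I_2$ for every $n$ and every $f$. Then, via the change of variables $x_t=\mu+\sigma y_t$ with $y_t\sim f$, one has ${\rm E}_\theta[\hat{\mu}_W]=\mu$ and ${\rm E}_\theta[\hat{\sigma}_W]=c_n\,\sigma$, where
\[
c_n := {\rm E}\Bigl[\sqrt{\tfrac{1}{n}\sum_{t=1}^{n}(y_t-\bar y)^2}\Bigr]
\]
depends only on $f$ and $n$. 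Therefore $\partial_\theta {\rm E}_\theta[\hat{\theta}_W]=\mathrm{diag}(1,c_n)$, and the bound in the theorem becomes $n^{-1}\mathrm{diag}(1,c_n^2)$. Subtracting from $\mathrm{Var}_\theta^W[\hat{\theta}_W]=n^{-1}I_2$ and multiplying by $n$, the claim reduces to showing $\mathrm{diag}(0,\,1-c_n^2)\to O$, i.e.\ $c_n\to 1$.

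The only non-algebraic step, and the one I expect to be the main obstacle, is establishing $c_n \to 1$. I would sandwich $c_n$: setting $Z_n := \sqrt{n^{-1}\sum_t (y_t-\bar y)^2}$, Jensen's inequality applied to the concave map $\sqrt{\cdot}$ gives $c_n = {\rm E}[Z_n] \le \sqrt{{\rm E}[Z_n^2]} = \sqrt{(n-1)/n}\to 1$, since ${\rm E}[Z_n^2]={\rm E}[n^{-1}\sum y_t^2]-{\rm E}[\bar y^2] = 1-1/n$. For the matching lower bound, the strong law of large numbers gives $Z_n\to 1$ almost surely (via $n^{-1}\sum y_t^2 \to 1$ and $\bar y^2 \to 0$ a.s.), so Fatou's lemma yields $\liminf_n c_n \ge 1$. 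Combining the two bounds gives $c_n\to 1$, completing the proof. Notice that the Jensen/Fatou sandwich avoids having to impose any moment condition on $f$ beyond the standing assumption of unit variance, which is pleasant given the location-scale setting.
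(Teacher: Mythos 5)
Your proposal is correct and mirrors the paper's own proof: the same exact computations give $\mathrm{Var}_\theta^W[\hat\theta_W]=n^{-1}I_2$, $G_W(\theta)=I_2$ per observation, and $\pd{}{\theta}\mathrm{E}_\theta[\hat\theta_W]=\mathrm{diag}(1,c_n)$, reducing the theorem to the single scalar limit $c_n\to 1$. The only difference is the tool used for that limit: the paper combines $\hat\sigma_W\overset{p}{\to}\sigma$ (law of large numbers and continuous mapping) with uniform integrability obtained from a Markov-type bound using $\mathrm{E}[\hat\sigma_W^2]\le\sigma^2$, whereas you sandwich $c_n$ between the Jensen bound $\sqrt{1-1/n}$ and a Fatou lower bound via the strong law --- both arguments are equally elementary and use only the unit-variance assumption on $f$.
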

\begin{proof}
	From \eqref{West},
	\begin{align*}
		\nabla \hat{\mu}_W = \frac{1}{n} \left( 1,\dots,1 \right),\quad \nabla \hat{\sigma}_W = \frac{1}{n\hat{\sigma}_W} ( x_1-\bar{x},\dots, x_n-\bar{x} ).
	\end{align*}
	Thus,
	\begin{align*}
		{\rm Var}_{\theta}^{\mathrm{W}} (\hat{\theta}_W)= \begin{pmatrix} {\rm E}_{\theta} [\nabla \hat{\mu}_W \cdot \nabla \hat{\mu}_W] & {\rm E}_{\theta} [\nabla \hat{\mu}_W \cdot \nabla \hat{\sigma}_W] \\ {\rm E}_{\theta} [\nabla \hat{\sigma}_W \cdot \nabla \hat{\mu}_W] & {\rm E}_{\theta} [\nabla \hat{\sigma}_W \cdot \nabla \hat{\sigma}_W] \end{pmatrix} = \frac{1}{n}\begin{pmatrix}
			1 & 0 \\
			0 & 1
		\end{pmatrix}.
	\end{align*}
	On the other hand, 
	\begin{align*}
		G_W(\theta)= n \begin{pmatrix}
			1 & 0 \\
			0 & 1
		\end{pmatrix}, \quad {\rm E}_{\theta}[\hat{\theta}_W]= \begin{pmatrix} \mu \\ c_n \sigma \end{pmatrix},
	\end{align*}
	where $c_n$ is the expected value of the sample standard deviation of $x_1,\dots,x_n \sim f$.
	Thus,
	\begin{align*}
		\left(\frac{\partial}{\partial \theta} {\rm E}_{\theta}[\hat{\theta}_W]\right)^\top G_W(\theta)^{-1} \left(\frac{\partial}{\partial \theta} {\rm E}_{\theta}[\hat{\theta}_W]\right) = \frac{1}{n}\begin{pmatrix}
			1 & 0 \\
			0 & c_n^2
		\end{pmatrix}.
	\end{align*}
	From the law of large numbers and continuous mapping theorem, 
	\begin{align*}
		\hat{\sigma}_W^2 = \frac{1}{n} \sum_{t=1}^n x_t^2 - \left( \frac{1}{n} \sum_{i=1}^n x_t \right)^2 \overset{p}{\to} (\mu^2+\sigma^2) - \mu^2 = \sigma^2
	\end{align*}
	as $n \to \infty$.
	Then, from continuous mapping theorem, $\hat{\sigma}_W \overset{p}{\to} \sigma$ as $n\to\infty$.
	Also, the Markov inequality
	\begin{align*}
		{\rm E}[\hat{\sigma}_W 1 (\hat{\sigma}_W>M)] \leq \frac{{\rm E}[\hat{\sigma}_W^2]}{M}=\frac{\sigma^2}{M},
	\end{align*}
	shows the uniform integrability of $\hat{\sigma}_W$: $\sup_n {\rm E}[\hat{\sigma}_W 1 (\hat{\sigma}_W>M)] \to 0$ as $M \to \infty$.
	Therefore, we have ${\rm E}[\hat{\sigma}_W] \to \sigma$ and thus $c_n \to 1$ as $n \to \infty$.
	Hence,
	\begin{align*}
		n \left( {\rm Var}_{\theta}^{\mathrm{W}}(\hat{\theta}_W) -\left(\frac{\partial}{\partial \theta} {\rm E}_{\theta} [\hat{\theta}_W] \right)^\top G_W(\theta)^{-1} \left(\frac{\partial}{\partial \theta} {\rm E}_{\theta} [\hat{\theta}_W]\right) \right) = \begin{pmatrix}
			0 & 0 \\
			0 & 1-c_n^2
		\end{pmatrix} \to O
	\end{align*}
	as $n\to\infty$.
\end{proof}

Since the Wasserstein variance of an estimator quantifies its robustness in terms of the increase of its variance due to noise contamination \cite{affine}, Theorem~\ref{th: asymptotic efficiency} implies that the Wasserstein estimator is robust against additive noise in location-scale families.
We confirm this for the Laplace distribution:
\begin{align}\label{laplace}
	p(x;\theta) = \frac{1}{\sqrt{2}\sigma}\exp\left(-\sqrt{2}\left|\frac{x-\mu}{\sigma}\right|\right).
\end{align}
For $\mu$, the Wasserstein estimator $\hat{\mu}_{\mathrm{W}}$ is the sample mean while the MLE $\hat{\mu}_{\mathrm{ML}}$ is the sample median.
Both estimators are unbiased and their variances are 
\begin{align*}
	\mathrm{Var}_{\theta}(\hat{\mu}_{W}) = \frac{\sigma^2}{n}, \quad \mathrm{Var}_{\theta}(\hat{\mu}_{\mathrm{ML}}) = \frac{\sigma^2}{2n}+o \left( \frac{1}{n} \right)
\end{align*}
as $n \to \infty$.
Now, suppose that we have noisy observations $\tilde{x}_1=x_1+z_1,\dots,\tilde{x}_n=x_n+z_n$ instead of $x_1,\dots,x_n$, where $z_1,\dots,z_n \sim {\rm N}(0,\varepsilon^2)$ are independent Gaussian noise with variance $\varepsilon^2$.	
Then, 
\begin{align*}
	\frac{\mathrm{Var}_{\theta}(\hat{\mu}_{\mathrm{W}}(x_1+z_1,\dots,x_n+z_n))-\mathrm{Var}_{\theta}(\hat{\mu}_{\mathrm{W}}(x_1,\dots,x_n))}{\varepsilon^2} = \frac{1}{n},
\end{align*}
which does not depend on $\varepsilon^2$.
On the other hand, as shown in Appendix,
\begin{align}\label{laplace_W}
	\frac{\mathrm{Var}_{\theta}(\hat{\mu}_{\mathrm{ML}}(x_1+z_1,\dots,x_n+z_n))-\mathrm{Var}_{\theta}(\hat{\mu}_{\mathrm{ML}}(x_1,\dots,x_n))}{\varepsilon^2} \approx \frac{2 \sigma}{\sqrt{\pi} n \varepsilon}
\end{align} 
for large $n$, which diverges as $\varepsilon^2 \to 0$.
Therefore, the Wasserstein estimator is more robust than MLE against small noise.
It is an interesting future problem to investigate the Wasserstein efficiency in comparison to Fisher efficiency for models other than location-scale families.

\begin{appendix}	
	\section{Derivation of \eqref{laplace_W}}
	From Section 13 of \cite{ferguson2017course}, the asymptotic distribution of the sample median of $n$ independent samples $x_1,\dots,x_n \sim p$ is
	\begin{align*}
		\sqrt{n} ({\rm median}(x_1,\dots,x_n)-m) \to {\rm N} \left(0,\frac{1}{4 p(m)^2} \right),
	\end{align*}
	as $n \to \infty$, where $m$ is the median of $p$.
	Thus, 
	\begin{align}\label{median_var}
		{\rm Var} ({\rm median}(x_1,\dots,x_n)) \approx \frac{1}{4 n p(m)^2} 
	\end{align}
	for large $n$.
	Therefore, for the Laplace distribution \eqref{laplace},
	\begin{align}\label{laplace_var}
		\mathrm{Var}_{\t}(\hat{\mu}_{\mathrm{ML}}(x_1,\dots,x_n)) \approx \frac{\sigma^2}{2n}
	\end{align}
	for large $n$.
	On the other hand, the probability density of the noisy observation $\tilde{x}=x+z$ with $z \sim {\rm N}(0,\varepsilon^2)$ is given by the convolution
	\begin{align*}
		p(\tilde{x}) &= \int_{-\infty}^\infty \frac{1}{\sqrt{2} \sigma} \exp \left( -\sqrt{2} \left| \frac{\tilde{x}-z-\mu}{\sigma} \right| \right) \frac{1}{\sqrt{2 \pi \varepsilon^2}} \exp \left( -\frac{z^2}{2 \varepsilon^2} \right) {\rm d} z.
	\end{align*}
	At the median $\tilde{x}=\mu$,
	\begin{align*}
		p(\tilde{x}=\mu) &= \int_{-\infty}^\infty \frac{1}{\sqrt{2} \sigma} \exp \left( -\sqrt{2} \left| \frac{-z}{\sigma} \right| \right) \frac{1}{\sqrt{2 \pi \varepsilon^2}} \exp \left( -\frac{z^2}{2 \varepsilon^2} \right) {\rm d} z \\
		&= 2 \int_{0}^\infty \frac{1}{2 \sqrt{\pi} \sigma \varepsilon} \exp \left( -\sqrt{2} \frac{z}{\sigma} -\frac{z^2}{2 \varepsilon^2} \right) {\rm d} z \\
		&= \frac{1}{\sqrt{\pi} \sigma \varepsilon}  \int_{0}^\infty \exp \left( -\frac{1}{2 \varepsilon^2} \left( z+\frac{\sqrt{2} \varepsilon^2}{\sigma} \right)^2 + \frac{\varepsilon^2}{\sigma^2 } \right) {\rm d} z \\
		&= \frac{\sqrt{2}}{\sigma} \Phi \left( -\frac{\sqrt{2} \varepsilon}{\sigma} \right) \exp \left( \frac{\varepsilon^2}{\sigma^2 } \right) \\
		&= \frac{1}{\sqrt{2} \sigma} - \frac{\sqrt{2}}{\sqrt{\pi} \sigma^2} \varepsilon + O(\varepsilon^2)
	\end{align*}
	as $\varepsilon \to 0$, where $\Phi$ is the cumulative distribution function of the standard Gaussian ${\rm N}(0,1)$.
	Thus, by using \eqref{median_var},
	\begin{align}\label{conv_var}
		{\rm Var}_{\theta}(\hat{\mu}_{\mathrm{ML}}(x_1+z_1,\dots,x_n+z_n)) \approx \frac{\sigma^2}{2n} \left( 1+\frac{4}{\sqrt{\pi} \sigma} \varepsilon + O(\varepsilon^2) \right)
	\end{align}
	for large $n$.
	Combining \eqref{laplace_var} and \eqref{conv_var} yields \eqref{laplace_W}.	
\end{appendix}

	\section*{Acknowledgments}
	We are grateful to the referees for constructive comments.
We thank Keiya Sakabe and Sosuke Ito for helpful comments.
We thank Frank Nielsen for pointing out a typo in an earlier version of this manuscript.
Takeru Matsuda was supported by JSPS KAKENHI Grant Numbers 19K20220, 21H05205, 22K17865 and JST Moonshot Grant Number JPMJMS2024.


\begin{thebibliography}{99}
	\bibitem[Amari and Matsuda(2024)]{affine} 
	Amari, S. \& Matsuda, T. (2024).
	Information geometry of {W}asserstein statistics on shapes and affine deformations.
	\textit{Information Geometry}, \textbf{7}, 285--309.
	
	\bibitem[Amari and Nagaoka(2000)]{amari2000methods} 
	Amari, S. \& Nagaoka, H. (2000). 
	\textit{Methods of Information Geometry}. 
	{American Mathematical Society}.
	
	\bibitem[Ay(2024)]{ay2024information} 
	Ay, N. (2024).
	Information geometry of the {O}tto metric.
	\textit{Information Geometry}, accepted.
	
	\bibitem[Benamou and Brenier(2000)]{benamou} 
	Benamou, J-D. \& Brenier, Y. (2000).
	A computational fluid mechanics solution to the {M}onge-{K}antorovich mass transfer problem.
	\textit{Numerische Mathematik}, \textbf{84}, 375--393.
	
	\bibitem[Chewi et al.(2025)]{Chewi} 
	Chewi, S., Weed, J. \& Rigollet, P. (2025).
	\textit{Statistical Optimal Transport}. 
	Springer.
	
	\bibitem[Dechant et al.(2022a)]{dechant2022geometric} 
	Dechant, A., Sasa, S. \& Ito, S. (2022).
	Geometric decomposition of entropy production into excess, housekeeping, and coupling parts.
	\textit{Physical Review E}, \textbf{106}, 024125.
	
	\bibitem[Dechant et al.(2022b)]{dechant2022geometric2} 
	Dechant, A., Sasa, S. \& Ito, S. (2022).
	Geometric decomposition of entropy production in out-of-equilibrium systems.
	\textit{Physical Review Research}, \textbf{4}, L012034.
	
	\bibitem[Ferguson(2017)]{ferguson2017course} 
	Ferguson, T. S. (2017). 
	\textit{A course in large sample theory}. 
	Routledge.
	
	\bibitem[Ito(2024)]{ito} 
	Ito, S. (2024).
	Geometric thermodynamics for the Fokker--Planck equation: stochastic thermodynamic links between information geometry and optimal transport. 
	\textit{Information Geometry}, \textbf{7}, 441--483.
	
	\bibitem[Joshi(1976)]{Fisher_efficiency2} 
	Joshi, V. M. (1976).
	On the attainment of the {C}ram{\'e}r-{R}ao lower bound.
	\textit{The Annals of Statistics}, \textbf{4}, 998--1002.
	
	\bibitem[Li and Zhao(2023)]{WIM} 
	Li, W. \& Zhao, J. (2023).
	Wasserstein information matrix.
	\textit{Information Geometry}, \textbf{6}, 203--255.
	
	\bibitem[Otsubo et al.(2020)]{otsubo2020estimating} 
	Otsubo, S., Ito, S., Dechant, A. \& Sagawa, T. (2020).
	Estimating entropy production by machine learning of short-time fluctuating currents.
	\textit{Physical Review E}, \textbf{101}, 062106.
	
	\bibitem[Otto(2001)]{Otto} 
	Otto, F. (2001).
	The geometry of dissipative evolution equations: the porus media equation.
	\textit{Communications in Partial Differential Equations}, \textbf{26}, 101--174.
	
	\bibitem[Peyr\'{e} and Cuturi(2019)]{PC2019} 
	Peyr\'{e}, G. \& Cuturi, M. (2019).
	Computational optimal transport: With Applications to Data Science.
	\textit{Foundations and Trends{\textregistered} in Machine Learning}, \textbf{11}, 355--607.
	
	\bibitem[Santambrogio(2015)]{Santambrogio} 
	Santambrogio, F. (2015). 
	\textit{Optimal transport for Applied Mathematicians}. 
	Springer.
	
	\bibitem[van der Vaart(1998)]{vV} 
	van der Vaart, A. W. (1998). 
	\textit{Asymptotic Statistics}. 
	Cambridge University Press.
	
	\bibitem[Villani(2003)]{villani2} 
	Villani, C. (2003). 
	\textit{Topics in Optimal Transportation}. 
	American Mathematical Society.
	
	\bibitem[Wijsman(1973)]{Fisher_efficiency} 
	Wijsman, R. A. (1973).
	On the attainment of the {C}ram{\'e}r-{R}ao lower bound.
	\textit{The Annals of Statistics}, \textbf{1}, 538--542.
\end{thebibliography}
\end{document}